\documentclass[a4paper,12pt]{article}
\usepackage{times, url}
\textheight 24.5cm
\textwidth 16.3cm
\oddsidemargin -0.1in
\evensidemargin 0.in
\topmargin -1.8cm

\usepackage{graphicx,amsmath,amssymb,amsthm,amsfonts,bm,float,cite}
\newtheorem{theorem}{Theorem}[section]

\newtheorem{lemma}{Lemma}[section]

\newcommand{\disp}{\displaystyle}
\def\F{{\mathbb F}}
\def\Q{{\mathbb Q}}                               
\def\Z{{\mathbb Z}}                               

\def\s1{\sum_{i = 1}^{j_{\alpha}}({t_i}^3~-~ 3{t_i}{\Delta_i})}
\def\u2{\sum_{m = 1}^{l} {\tilde t_m}^3}
\def\v3{\sum_{j = 1}^{s} 3~{\tilde t'_j}}
\def\w4{\sum_{i=1}^{j_{\alpha}}({t_i}^4~-~4~{t_i}^2~{\Delta_i}~+~2~{\Delta_i^2})}
\def\x5{\sum_{m = 1}^{l} {\tilde t_m}^4}
\def\y6{\sum_{j = 1}^{s} 4~{\tilde {t'_j}}^2{\tilde {\Delta'_j}}}
\def\z7{\sum_{n = 1}^{r} 2~{\tilde {\Delta''_n}}^2}

\def\vpmod{\!\!\!\pmod} 

\begin{document}
\setcounter{page}{1}

\thispagestyle{empty} 

\begin{center}
{\LARGE \bf  Simpler congruences for Jacobi sum $J(1,1)_{49}$ of order 49.\\[4mm] } 
\vspace{8mm}

{\Large \bf Ishrat Jahan Ansari$^1$, Vikas Jadhav$^2$ and Devendra Shirolkar$^3$}
\vspace{3mm}

$^1$ Department of Mathematics, M.C.E. Society's Abeda Inamdar Senior College,\\ Savitribai Phule Pune University\\ 
Research Scholar, Sir Parshurambhau College\\
Pune, Maharashtra, India \\
e-mail: \url{ishrat1984@gmail.com}
\vspace{2mm}

$^2$ Department of Mathematics, Nowrosjee Wadia College\\ 
Pune, Maharashtra, India \\
e-mail: \url{svikasjadhav@gmail.com}
\vspace{2mm}

$^3$ Savitribai Phule Pune University\\
Pune, Maharashtra, India \\
e-mail: \url{dshirolkar@gmail.com},   
\end{center}

{\bf Abstract:} In this paper we determine congruence of Jacobi sums $J(1,1)_{49}$ of order 49 over a field $\F_{p}$. 
We also show that simpler congruences hold for $J(1,1)_{49}$ in the case of artiad and hyperartiad primes.  \\
{\bf Keywords:} Jacobi sums, Cyclotomic numbers, Congruences, Dickson-Hurwitz Sums.  \\ 
{\bf 2020 Mathematics Subject Classification:} 11T22, 11T24. 
\vspace{5mm}

\section{Introduction} \label{sec:Intr}

For a positive integer $e \geq 2$, the Jacobi sums of order $e$ are algebraic integers in the cyclotomic field $\Q(\zeta_{e})$, where $\zeta_{e} = \exp(2\pi i/e)$. 
These are defined in the set up of a finite field $\F_{q}$ of $q =p^{r}$ where $q \equiv 1\,\, \vpmod{e}$, $p$ prime. 
Jacobi sums are important in the theory of cyclotomy and their congruences have been studied by many authors.
Earlier authors \cite{2} obtained congruences in the set up of $\F_{p}$, $p\equiv 1\,\, \vpmod{e}$  and later authors \cite{5} considered $q = p^r \equiv 1\,\, \vpmod{e}$.
\begin{enumerate}
\item It is well known that (\cite{2} and \cite{13}) the Jacobi sums of odd prime order $l$, 
$J(1,j)_{l}\equiv -1\,\,\vpmod{(1-\zeta_{l}}^2)$.\\ This congruence also holds $\vpmod{(1-\zeta_{l}}^3)$ (\cite{7} and \cite{14}).
\item Congruence of Jacobi sums of order $2l$ ($l$ odd prime) were obtained by V. V. Acharya, S. A. Katre \cite{1}. 
They showed that\\ $J(1,n)_{2l}\equiv -\zeta^{m(n+1)}\,\, \vpmod{(1-\zeta_{l}}^2)$. Where $n$ is an odd integer such that $1\leq n\leq 2l-3$ and $m= ind\,\, 2$.
\item A congruence of Jacobi sum $J(1,1)_{9}$ of order 9 was obtained by S.  A.  Katre and Rajwade \cite{8} they showed that\\
$J(1,1)_{9}\equiv -1-(ind\,\, 3)(1-\omega)\,\,\vpmod{(1-\zeta_{9}}^4)$ where $\omega={\zeta_{9}}^3$.
\item Congruences of order $l^2$ ($l$ odd prime) were obtained by Devendra Shirolkar and S. A. Katre. 
Refer to Theorem and Remarks followed (\cite{15}). They showed\\
\begin{equation*}
J(1,n)_{l^2} \equiv 
\begin{cases}
-1 + \displaystyle{\sum_{i=3}^{l}c_{i,n}(\zeta -1)^i} \vpmod{(1-\zeta)^{l + 1}} & \text{if $\gcd (l,n) =1$,}\\
-1  \vpmod{(1-\zeta)^{l + 1}} &\text{if $\gcd (l,n) = l$,}
\end{cases}
\end{equation*}
where $1\leq n\leq l^{2}-1$.
\item If $k$ is a an odd power $>3$ (Refer to \cite{6})\\
$J(i,j)_{k}\equiv -1\,\, \vpmod{(1-\zeta_{k}}^3)$\\
R. J. Evans \cite{5} generalised this result to all $k>2$ by elementary methods getting sharper congruences in some cases especially when $k>8$ is a power of 2.
\end{enumerate}
\section{Preliminaries}
Let $e$ be a positive integer $\ge 2$ and  $q=p^{r}\equiv 1\vpmod{e}$, $p$ prime. Let $\F_{q}$ be a finite field with $q$ elements. Write $p^{r}=q= ef + 1$. Let $\zeta$ be a complex primitive $e$th root of unity. If $\gamma$ is a generator of $\F_{q}^{*}$ then define the multiplicative character $\chi$ : $\F_{q}\rightarrow \Q(\zeta)$ by $\chi(\gamma) =\zeta$, $\chi(0)= 0$.
\noindent Given a generator $\gamma$ of $\F_{q}^{*}$ define the Jacobi sum $J(i,j)_{e}$ by,
$$J(i,j) = J(i,j)_{e} = \sum_{v\in \F_{q}}\chi^{i}(v)\chi^{j}(1+v),\,\, 0\leq i,j\leq e-1.$$
Here $\chi^{0}(0) = 0$. Also, $i$ and $j$ can be considered modulo $e$, with the understanding that $\chi^{i}(0) = 0$ for any integer $i$. Note that $J(i,j)_{e} \in \Z[\zeta]$, the ring of integers of $\Q(\zeta).$\\
\indent A variation of the Jacobi sum is defined as,
$$J(\chi^{i},\chi^{j})_{e} = \sum_{v\in \F_{q}}\chi^{i}(v)\chi^{j}(1-v),\,\,0\leq i, j\leq e-1.$$
Observe that $J(i,j)_{e}$= $\chi^{i}(-1)J(\chi^{i},\chi^{j})_{e}$. 
When $q = 2^r$, $\chi^{i}(-1) = \chi^{i}(1) =1$ and both the Jacobi sums coincide. 
Otherwise, $\chi^{i}(-1) = (-1)^{if}$ and hence the two Jacobi sums differ at most in sign.
 For multiplicative characters $\chi$ and $\psi$ on $\F_{q}$, $J(\chi,\psi)$ can be
 analogously defined.

In the following theorem, we state some standard results about Jacobi sums.
\begin{theorem}\label{thm:theorem1}(Elementary properties of Jacobi sums)
\begin{enumerate}
\item[1)] If $i$ and $j$ are congruent to $0$ modulo $e$ then $J(\chi^{i},\chi^{j})_{e} = q - 2$. 
\item[2)] If exactly one of $i$ and $j$ is congruent to $0$ modulo $e$, then $J(\chi^{i},\chi^{j})_{e} = -1$. 
\item[3)] If $i$ is nonzero modulo $e$ and $i+j$ is congruent to $0$ modulo $e$ then  $J(\chi^{i},\chi^{j})_{e} = -\chi^{i}(-1)$.
\item[4)] $J(\chi^{i},\chi^{j})_{e} = J(\chi^{j},\chi^{i})_{e} = \chi^{i}(-1)J(\chi^{-i-j}, \chi^{i})_{e}$.
\item[5)] If $e$ does not divide $i,j$ and $i + j$ then $\mid J(\chi^{i},\chi^{j})_{e}\mid  =  \sqrt{q}$.
\end{enumerate}
\end{theorem}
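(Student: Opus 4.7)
The plan is to derive each of the five items directly from the defining sum $J(\chi^i,\chi^j)=\sum_{v\in\F_q}\chi^i(v)\chi^j(1-v)$, together with three standard tools: the convention $\chi^0(0)=0$, character orthogonality $\sum_{x\in\F_q}\chi(x)=0$ for nontrivial $\chi$, and suitably chosen bijective substitutions of the summation variable. Items (1)--(4) are essentially bookkeeping; only item (5) will require external machinery.

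For (1), when both $i$ and $j$ are $\equiv 0 \pmod e$, each character factor equals $1$ at every $v\in\F_q$ except at $v=0$ and $v=1$, where one factor vanishes by the $\chi^0(0)=0$ convention, leaving $q-2$ unit terms. For (2), with (say) $i\equiv 0$ and $j\not\equiv 0\pmod e$, the sum collapses to $\sum_{u\in\F_q,\,u\neq 1}\chi^j(u)=-1$ by orthogonality. For (3), the substitution $v\mapsto v/(1-v)$ is a bijection of $\F_q\setminus\{0,1\}$ onto $\F_q\setminus\{0,-1\}$, reducing the sum to $\sum_{u\neq 0,-1}\chi^i(u)=-\chi^i(-1)$. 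For (4), the involution $v\mapsto 1-v$ swaps the roles of $i$ and $j$; for the second equality one rewrites $J(\chi^{-i-j},\chi^i)$ by the substitution $v\mapsto v^{-1}$ on $\F_q^{*}$, uses multiplicativity of $\chi$, and invokes $\chi^i(-1)^2=1$ to close the identity.

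The main obstacle is (5), since this is the only part not following from elementary manipulation of the defining sum. I plan to pass through Gauss sums: fixing a nontrivial additive character $\psi_0$ of $\F_q$, define $g(\chi)=\sum_{x\in\F_q}\chi(x)\psi_0(x)$ and establish the factorization $J(\chi^i,\chi^j)=g(\chi^i)g(\chi^j)/g(\chi^{i+j})$ under the hypothesis that $\chi^i$, $\chi^j$, and $\chi^{i+j}$ are all nontrivial, which is precisely the hypothesis of (5). The modulus claim then reduces to the identity $|g(\chi)|^2=q$ for nontrivial $\chi$, itself proved by expanding $g(\chi)\overline{g(\chi)}$ into a double sum and collapsing via orthogonality of $\psi_0$. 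This is the only place in the theorem where the full additive structure of $\F_q$, rather than only character orthogonality on $\F_q^{*}$, enters in an essential way.
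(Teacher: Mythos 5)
Your proposal is correct. Note, however, that the paper does not actually prove Theorem~2.1: its ``proof'' is only a pointer to Dickson \cite{2} for $q=p$ and Storer \cite{16} for $q=p^r$, so there is no in-paper argument to compare against; what you have written is essentially the standard argument those references contain. Your treatment of (1)--(4) by the $\chi^0(0)=0$ convention, orthogonality on $\F_q^{*}$, and the substitutions $v\mapsto 1-v$, $v\mapsto v/(1-v)$, $v\mapsto v^{-1}$ is sound (the map $v\mapsto v/(1-v)$ does carry $\F_q\setminus\{0,1\}$ bijectively onto $\F_q\setminus\{0,-1\}$, and the excluded points contribute nothing since every term involving $\chi^{k}(0)$ vanishes). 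Two small points worth making explicit in a written version: in (4) the identity is asserted for all $i,j$, so you should check the degenerate cases (some of $\chi^i,\chi^j,\chi^{i+j}$ trivial) separately, since the $v\mapsto v^{-1}$ computation implicitly divides by $\chi^{i}(v)$-type factors; they all reduce to items (1)--(3) and do hold. For (5), the route through the Gauss-sum factorization $J(\chi^i,\chi^j)=g(\chi^i)g(\chi^j)/g(\chi^{i+j})$ together with $|g(\chi)|^2=q$ is the standard one and is valid precisely under the hypothesis that $e$ divides none of $i$, $j$, $i+j$; you correctly identify this as the only part needing the additive structure of $\F_q$.
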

\begin{proof} See \cite {2} for $q = p$ case and \cite{16} for $q = p^r$.\end{proof}
\noindent \textbf{Remark}: If $f$ is even or $q =2^r$ then $J(i,j)_e = J(\chi^i, \chi^j)_e$, so (4) gives $J(i,j)_e = J(j,i)_e = J(-i-j,j)_e = J(j,-i-j)_e = J(-i-j,i)_e = J(i,-i-j)_e$. In particular $J(i,i)_e = J(-2i,i)_e = J(i, -2i)_e$.
\section{Cyclotomy}
Let $\gamma$, $\zeta$ and $\chi$ be as in Section 2. For $ 0 \leq i, j \leq e - 1$ ($i,j\,\,\vpmod e$), define the $e^{2}$ cyclotomic numbers $(i,j)_{e}$ by $(i,j)_{e}$ = Card.$(X_{ij})$ where
\begin{eqnarray*} 
X_{i j}&=&\{v \in \F_{q}\,\, |\,\, \chi(v) = \zeta^{i}, \chi(v + 1) = \zeta^{j}\}\\
&=& \{v \in \F_{q}- \{0, -1\}\,\, | \,\, {\rm{ind}}_\gamma v \equiv i\vpmod{e},\,\, \textrm{ind}_\gamma (v + 1) \equiv j \vpmod{e}\}.
\end{eqnarray*}  
We state below some basic properties of the cyclotomic numbers. (See \cite{3} for $q=p$, \cite{16}). For $q =p^r$,
\begin{eqnarray*}
(i,j)_{e}& =& (i',j')_{e}\,\,\,\,\,\,{\rm{if}}\,\, i\equiv i' \,\,\,{\rm{and}}\,\,\,\, j \equiv j'\vpmod{e}.\\
(i,j)_{e} &=& (e-i,j-i)_{e}.\\
&=& \begin{cases}
(j,i)_{e},\,\,\,\,\,\,\,{\rm{if}}\,\,f \,\,{\rm{is \,\,even}} \,\,{\rm{or}}\,\,q = 2^r,\\
(j+\frac{1}{2}e, i+\frac{1}{2}e)_{e},\,\,\,\,\,\,\,\,{\rm{otherwise.}}
\end{cases}
\end{eqnarray*} 
Thus if $f$ is even or $q =2^r$, $r\geq 2$ then
\begin{eqnarray}\label{eq:1}
(i,j)_e&=&(j,i)_e=(i-j,-j)_e =(j-i,-i)_e\notag\\
&=&(-i,j-i)_e=(-j,i-j)_e.
\end{eqnarray} 
The $e^{2}$ Jacobi sums and the $e^2$ cyclotomic numbers are related by
\begin{eqnarray}\label{eq:2}
\sum_{i}\sum_{j} \zeta^{-(ai+bj)}J(i,j)_{e} = e^2(a,b)_e ,
\end{eqnarray}
\noindent and
\begin{eqnarray}\label{eq:3}
\sum_{i}\sum_{j}(i,j)_e\zeta^{ai+bj} = J(a,b)_{e}.
\end{eqnarray}
Jacobi sums and cyclotomic numbers are related to Dickson-Hurwitz sums. These are defined for $i, j \pmod{e}$ by  (for $q = p$, see \cite{2})
\begin{eqnarray}\label{eq:4}
 B(i,j) = B(i,j)_{e}= \sum_{h=0}^{e-1}(h,i-jh)_e.
\end{eqnarray}
\noindent They satisfy the relation $B(i,j)_{e} = B(i,e-j-i)_{e}$. Also,
\begin{equation}\label{eq:5}
B(i,0)_{e} =
\begin{cases}
f-1 & \text{if $i = 0$,}\\
f & \text{if $1\leq i \leq{e-1}$.}
\end{cases}
\end{equation}
and \begin{equation}\label{eq:6}
\sum_{i=0}^{e-1}B(i,j)_{e} = q-2.\end{equation}
\noindent Dickson-Hurwitz sums and Jacobi sums $J(\chi,\chi^{j})_{e}$ are related by (for $q = p$, see \cite{2})
\begin{eqnarray}\label{eq:7}
\chi^{j}(-1)J(\chi,\chi^{j})_{e} = \chi^{j}(-1)\chi(-1)J(1,j)_{e} = \sum_{i=0}^{e-1}B(i,j)_{e}\zeta^i.
\end{eqnarray}
Hence if $f$ is even or $q =2^r$ then $J(1,j)_{e} = \displaystyle{\sum_{i=0}^{e-1}B(i,j)_{e}\zeta^i.}$
\section{Congruences of Jacobi sums $J(1,n)_{l^2}$ of order $l^{2}$}
\noindent The determining congruences of Jacobi sums $J(1,\,n)_{l^2}$ of order $l^{2}$ have been studied by Devendra Shirolkar and S.A. Katre \cite{15}. This congruence is in the terms of linear combination of cyclotomic numbers of order $l$. Their work generalises the work of R.J. Evans \cite{5}. We state their important result for ready reference.
\begin{lemma}\label{lem:lemma1}
Let $l>3$ be a prime and  $1\leq n \leq l^2 -1$. Write $n =dl +n'$ where $1\leq n' \leq l-1$  . For $1\leq h \leq l-1$, let\begin{eqnarray*}
\lambda_{h} = \lambda_{h}(n) &=& \left[\frac{n'h}{l}\right] + \left[\frac{-h(n' + 1)}{l}\right], \end{eqnarray*} and $1\leq h,k \leq l-1 $, $h \neq k$, let \begin{eqnarray*}\lambda_{h,k} = \lambda_{h,k}(n)&=& \left[\frac{h + n'k}{l}\right] + \left[\frac{k + n'h}{l}\right] + \left[\frac{n'k -h(n'+1)}{l}\right]\\ &&+ \left[\frac{n'h - k(n'+ 1)}{l}\right] + \left[\frac{k - h(n' + 1)}{l}\right] + \left[\frac{h -k(n'+1)}{l}\right].
\end{eqnarray*} For a given $n$, $\lambda_{h,k}$ depends only on the class of six elements (cf. \eqref{eq:1}) to which $(h,k)_l$ belongs. Define \begin{eqnarray*}S(n):= \sum_{t = 0}^{l-1}\sum_{j= 0}^{l-1}tB(lt + j, n)_{l^2}.\end{eqnarray*} Then 
\begin{eqnarray*}
S(n) \equiv 
\disp{\sum_{h = 1}^{l-1}}\lambda_{h} (h,0)_{l} + \disp{\sum_{c}}\lambda_{h,k}(h,k)_{l} \pmod l
\end{eqnarray*} 
where $\disp{\sum_{c}}$ is taken over a set of representatives of classes of six elements of cyclotomic numbers 
of order $l$, obtained with respect to \eqref{eq:1}. Furthermore $S(n)\equiv 0 \pmod l$ if gcd $(l,n) = l.$
\end{lemma}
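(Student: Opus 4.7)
The plan is to unfold the Dickson--Hurwitz sums inside $S(n)$, swap the order of summation, and then pass from cyclotomic numbers of order $l^{2}$ to those of order $l$ by aggregating over residues modulo $l$. Starting from (\ref{eq:4}), I would write
$$
S(n) = \sum_{h=0}^{l^{2}-1}\sum_{t,j} t\,(h,\,lt+j-nh)_{l^{2}} = \sum_{h=0}^{l^{2}-1}\sum_{w=0}^{l^{2}-1} \left\lfloor \frac{(w+nh)\bmod l^{2}}{l}\right\rfloor (h, w)_{l^{2}},
$$
where the second equality uses the substitution $w \equiv lt+j-nh \pmod{l^{2}}$: for each fixed $h$ this is a bijection between $(t,j)\in\{0,\ldots,l-1\}^{2}$ and $w\in\{0,\ldots,l^{2}-1\}$, with $t$ recovered as $\lfloor(lt+j)/l\rfloor$.

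Stripping the outer ``$\bmod l^{2}$'' changes the floor by a multiple of $l$, so modulo $l$ I may replace it by $\lfloor(w+nh)/l\rfloor$. Writing $h=la+h'$, $w=lb+w'$, $n=dl+n'$ with $0\le a,b,h',w'\le l-1$, the identity $w+nh = l^{2}da + l(b+dh'+n'a)+(w'+n'h')$ yields
$$
\left\lfloor\frac{w+nh}{l}\right\rfloor \equiv b + dh' + n'a + \left\lfloor\frac{w'+n'h'}{l}\right\rfloor \pmod l.
$$
Split $S(n) \equiv S_{1}+S_{2}+S_{3}+S_{4} \pmod l$ according to these four summands. An easy counting argument shows $S_{1}, S_{2}, S_{3}\equiv 0 \pmod l$: each of $\sum_{h}(h,w)_{l^{2}}$ and $\sum_{w}(h,w)_{l^{2}}$ counts elements of $\F_{q}\setminus\{0,-1\}$ of a given index modulo $l^{2}$ and equals $f$, with a $-1$ correction only at $w=0$ and $h=0$ respectively (using $\operatorname{ind}(-1)\equiv 0 \pmod{l^{2}}$, trivial for $p=2$ and valid for $p$ odd since then $f$ is even). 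The surviving outer sum in each case is $f$ times $\sum_{r=0}^{l^{2}-1}(r\bmod l) = l^{2}(l-1)/2$, hence $\equiv 0 \pmod l$, and the correction contributes $0$ since the relevant coordinate is already $0$ there.

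The remaining piece is
$$
S(n) \equiv S_{4} = \sum_{h',w'=0}^{l-1}\left\lfloor\frac{w'+n'h'}{l}\right\rfloor (h',w')_{l} \pmod l,
$$
using $\sum_{a,b=0}^{l-1}(la+h',lb+w')_{l^{2}} = (h',w')_{l}$. I would finish by regrouping the $l^{2}$ ordered pairs $(h',w')$ into the equivalence classes defined by (\ref{eq:1}): the $(0,0)_{l}$ class contributes nothing; each three-element class $\{(h,0),(0,h),(-h,-h)\}$ (one per $h\in\{1,\ldots,l-1\}$) contributes $\lfloor n'h/l\rfloor + 0 + \lfloor -h(n'+1)/l\rfloor = \lambda_{h}$ times $(h,0)_{l}$; and each six-element class through $(h,k)$ contributes $\lambda_{h,k}$ times $(h,k)_{l}$, because the six floor arguments $k+n'h$, $h+n'k$, $n'h-k(n'+1)$, $n'k-h(n'+1)$, $k-h(n'+1)$, $h-k(n'+1)$ match, up to reordering, the six summands defining $\lambda_{h,k}$. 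When $\gcd(l,n)=l$ we have $n'=0$, so every $\lfloor w'/l\rfloor$ with $0\le w'\le l-1$ is $0$ and thus $S_{4}=0$, giving the supplementary claim. The main technical obstacle is the class-by-class bookkeeping in this last step: checking that the six floor expressions obtained from the symmetries in (\ref{eq:1}) line up exactly with the six summands defining $\lambda_{h,k}$, and that the three-element classes are correctly parametrized by $h\in\{1,\ldots,l-1\}$ without double-counting.
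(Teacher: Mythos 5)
The paper does not actually prove this lemma --- its ``proof'' is the citation ``Refer [15]'' --- so I am assessing your argument on its own merits. Your overall route is the natural (and essentially the standard) one: unfold $B(lt+j,n)_{l^2}$ via \eqref{eq:4}, substitute $w\equiv lt+j-nh\pmod{l^2}$, replace $\lfloor((w+nh)\bmod l^2)/l\rfloor$ by $\lfloor(w+nh)/l\rfloor$ modulo $l$, expand in base $l$, and kill $S_1,S_2,S_3$ using the marginal counts $\sum_h(h,w)_{l^2}=f-\delta_{w,0}$ and $\sum_w(h,w)_{l^2}=f-\delta_{h,0}$ (your observation that $\mathrm{ind}(-1)\equiv 0\pmod{l^2}$ because $f$ is even when $p$ is odd is exactly the point needed there). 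All of that, and the collapse $\sum_{a,b}(la+h',lb+w')_{l^2}=(h',w')_l$, is correct.

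The gap is in the final step, which you labelled ``bookkeeping'' but which is where the real content sits. In $S_4=\sum_{h',w'}\lfloor(w'+n'h')/l\rfloor(h',w')_l$ the floor is evaluated at representatives $h',w'\in\{0,\dots,l-1\}$, whereas the numerators defining $\lambda_h$ and $\lambda_{h,k}$ are the \emph{unreduced} linear forms, which are often negative; the linear forms match up to reordering, but their floors do not. For the three-element class, the element $(-h,-h)$ enters $S_4$ as $(l-h,l-h)$ with coefficient $\lfloor(n'+1)(l-h)/l\rfloor=(n'+1)+\lfloor-h(n'+1)/l\rfloor$, so the class contributes $\lambda_h+(n'+1)$, not $\lambda_h$; a direct check (e.g.\ $l=5$, $n'=1$, $(h,k)=(1,2)$ gives class coefficient $4$ while $\lambda_{1,2}=-2$) shows each six-element class contributes $\lambda_{h,k}+3(n'+1)$. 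The lemma survives only because the total discrepancy
$(n'+1)\bigl(\sum_{h=1}^{l-1}(h,0)_l+3\sum_c(h,k)_l\bigr)=(n'+1)\,\tfrac{(q-1)-(q-1)/l}{2}=(n'+1)\,\tfrac{lf(l-1)}{2}$
is divisible by $l$; establishing this needs the further identities $\sum_{i,j}(i,j)_l=q-2$ and $\sum_{h=0}^{l-1}(h,0)_l=(q-1)/l-1$, which your argument never invokes. As written, the assertion that each class contributes exactly $\lambda_h$ (resp.\ $\lambda_{h,k}$) is false, and the final congruence does not follow without inserting and cancelling these correction terms.
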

\begin{proof}
Refer \cite{15}.
\end{proof}
\begin{theorem}\label{thm:theorem2}
Let $l > 3$ be a prime and $p^r = q \equiv 1\,\, \vpmod{l^2}$. If $ 1 \leq n \leq l^2 -1$, then a (determining) congruence for $J(1,n)_{l^2}$ for a finite field $\F_{q}$ is given by
\begin{equation*}
J(1,n)_{l^2} \equiv 
\begin{cases}
-1 + \displaystyle{\sum_{i=3}^{l}c_{i,n}(\zeta -1)^i} \vpmod{(1-\zeta)^{l + 1}} & \text{if $\gcd (l,n) =1$,}\\
-1  \vpmod{(1-\zeta)^{l + 1}} &\text{if $\gcd (l,n) = l$,}
\end{cases}
\end{equation*}
where for $3 \leq i \leq l-1$, $\displaystyle{c_{i,n}= \sum_{u=i}^{l-1}{u\choose i}B(u,n')_{l}}$ and $c_{l,n} = S(n)$ is given by Lemma \ref{lem:lemma1}.
\end{theorem}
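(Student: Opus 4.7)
My plan is to start from the expansion
\[
J(1,n)_{l^2}=\sum_{i=0}^{l^2-1}B(i,n)_{l^2}\,\zeta^i
\]
given by \eqref{eq:7}; the sign from $\chi^{n+1}(-1)$ is trivial here because for odd $q$ the hypothesis $q\equiv 1\pmod{l^2}$ together with $l$ odd forces $l^2\mid (q-1)/2$, while $q=2^r$ is automatic. Writing $\zeta^i=(1+(\zeta-1))^i=\sum_k\binom{i}{k}(\zeta-1)^k$ and switching the order of summation yields
\[
J(1,n)_{l^2}=\sum_{k\ge 0}A_k\,(\zeta-1)^k,\qquad A_k:=\sum_{i=0}^{l^2-1}\binom{i}{k}B(i,n)_{l^2}\in\Z.
\]
Terms with $k\ge l+1$ vanish modulo $(1-\zeta)^{l+1}$ automatically, and since $l$ is a unit multiple of $(1-\zeta)^{l(l-1)}$ with $l(l-1)\ge l+1$ for $l>3$, the integers $A_k$ with $0\le k\le l$ only matter modulo $l$. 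The task therefore reduces to computing each $A_k\bmod l$.

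The core step is to collapse the order-$l^2$ quantity $A_k$ to an order-$l$ expression. Writing $i=lt+u$ with $0\le t,u\le l-1$, Lucas's theorem (equivalently, $(1+x)^l\equiv 1+x^l\pmod l$) gives $\binom{lt+u}{k}\equiv\binom{u}{k}\pmod l$ for $0\le k\le l-1$, while extracting the coefficient of $x^l$ in $(1+x^l)^t(1+x)^u$ yields $\binom{lt+u}{l}\equiv t\pmod l$. On the Dickson--Hurwitz side, I would prove the identity
\[
\sum_{t=0}^{l-1}B(lt+u,n)_{l^2}=B(u,n')_l
\]
by unfolding $B(lt+u,n)_{l^2}=\sum_h(h,lt+u-nh)_{l^2}$, substituting $h=lh_1+h_0$ and $n=ld+n'$, observing that as $t$ and $h_1$ vary the second index sweeps every residue modulo $l$, and then invoking the standard summation rule that summing $(a,b)_{l^2}$ over $a\equiv a'$, $b\equiv b'\pmod l$ produces $(a',b')_l$. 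Combining these ingredients, for $1\le k\le l-1$ one obtains $A_k\equiv\sum_{u=k}^{l-1}\binom{u}{k}B(u,n')_l\equiv c_{k,n}\pmod l$, and the extra factor $t$ for $k=l$ gives $A_l\equiv S(n)=c_{l,n}\pmod l$.

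What remains is the low-degree and degenerate bookkeeping. For $k=0$, $A_0=q-2\equiv -1\pmod l$, supplying the constant $-1$. In the case $\gcd(l,n)=1$, the vanishing of $c_{1,n}$ and $c_{2,n}$ modulo $l$ is read off from the classical congruence $J(1,n')_l\equiv -1\pmod{(1-\zeta_l)^3}$ cited as item (1) in the introduction: expanding $J(1,n')_l$ itself in powers of $\zeta_l-1$ and matching valuations in $\Z[\zeta_l]/(1-\zeta_l)^3$ forces $l\mid\sum_u u\,B(u,n')_l$ and $l\mid\sum_u\binom{u}{2}B(u,n')_l$. In the case $\gcd(l,n)=l$ (so $n'=0$), \eqref{eq:5} gives $B(u,0)_l=f_l$ for $u\neq 0$ and $f_l-1$ for $u=0$, whence $A_k\equiv f_l\binom{l}{k+1}\pmod l$ for $1\le k\le l-1$; this vanishes because $l\mid\binom{l}{k+1}$ for $k<l-1$ and because $f_l=(q-1)/l=lf$ is divisible by $l$ precisely due to the hypothesis $l^2\mid q-1$ (the case $k=l-1$, where $\binom{l}{l}=1$, is where this stronger hypothesis is actually used), while $A_l\equiv S(n)\equiv 0\pmod l$ by the final clause of Lemma~\ref{lem:lemma1}. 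The main technical obstacle, I expect, is the collapse identity $\sum_tB(lt+u,n)_{l^2}=B(u,n')_l$: carrying out the substitutions cleanly and invoking the correct passage from cyclotomic numbers of order $l^2$ to those of order $l$ is the combinatorial heart of the argument, and it is exactly this bookkeeping that forces the $k=l$ coefficient to take the form $S(n)$ of Lemma~\ref{lem:lemma1} rather than a cleaner combination of $B(u,n')_l$'s.
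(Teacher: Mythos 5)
Your proposal is correct, but note that the paper itself gives no proof of this theorem — it simply defers to Shirolkar–Katre \cite{15} — and your reconstruction follows essentially the method of that reference: expand $J(1,n)_{l^2}=\sum_i B(i,n)_{l^2}\zeta^i$ in powers of $\zeta-1$, reduce the coefficients mod $l$ via Lucas's theorem and the collapse identity $\sum_t B(lt+u,n)_{l^2}=B(u,n')_l$, and kill the degree-$1$ and degree-$2$ terms using the classical congruence $J(1,n')_l\equiv -1\pmod{(1-\zeta_l)^3}$. All the individual steps you sketch (including the $k=l$ coefficient reducing to $S(n)$ and the $\gcd(l,n)=l$ case via $l\mid\binom{l}{k+1}$ and $l\mid(q-1)/l$) check out.
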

\begin{proof}
Refer \cite{15}.
\end{proof}
\section{Cyclotomic numbers of order 7}
There are 49 cyclotomic numbers if order 7. Out of these only twelve distinct cyclotomic numbers of order 7 are sufficient to determine the remaining (See equation(1)). 
If $p\equiv 1\,\,\vpmod7$ the Diophantine system of Leonhard and Williams is given by (see \cite{10}).
There are 49 cyclotomic numbers of order 7. Out of these only twelve distinct cyclotomic numbers of order 7 are sufficient to determine the remaining.
If $p\equiv 1\,\,\vpmod7$ the Diophantine system of Leonard and Williams is given by (see \cite{10}).\\

	$\begin{aligned}
		72p&=2{x_{1}}^2+42({x_{2}}^2+{x_{3}}^2+{x_{4}}^2)+343({x_{5}}^2+3{x_{6}}^2),\\
		&12{x_{2}}^2-12{x_{4}}^2+147{x_{5}}^2-441{x_{6}}^2+56x_{1}x_{6}+\\
		&24x_{2}x_{3}-24x_{2}x_{4}+48x_{3}x_{4}+98x_{5}x_{6}=0,\\
		&12{x_{5}}^2-12{x_{4}}^2+49{x_{5}}^2-147{x_{6}}^2+28x_{1}x_{5}\\
		&+28x_{1}x_{6}+48x_{2}x_{3}+24x_{3}x_{4}+490x_{5}x_{6}=0.
	\end{aligned}$\\

$x_{1}\equiv 1\vpmod 7 $ has six non-trivial solutions in addition to the two trivial solutions\\ $(-6t, \pm2u,\pm2u, \mp2u, 0, 0)$
where $t$ and $u$ are given by $p=t^2+7u^2$, $t\equiv 1\vpmod7$.
If $X_{1}=(x_{1},x_{2},x_{3}, x_{4}, x_{5}, x_{6})$ is one of the non-trivial solutions then the other five non-trivial solutions are: (see \cite{10}.)\\
\begin{center}
$\begin{aligned}
	X_{2}=(x_{1}, x_{3},-x_{4}, x_{2},-\frac{1}{2}(x_{5}+3x_{6}),\frac{1}{2}(x_{5}-x_{6})),\\
	X_{3}=(x_{1}, x_{4},-x_{2}, x_{3},-\frac{1}{2}(x_{5}-3x_{6}),-\frac{1}{2}(x_{5}+x_{6})),\\
	X_{4}=(x_{1}, -x_{4},x_{2}, -x_{3},-\frac{1}{2}(x_{5}-3x_{6}),-\frac{1}{2}(x_{5}+x_{6})),\\
	X_{5}=(x_{1}, -x_{3},x_{4}, x_{2},-\frac{1}{2}(x_{5}+3x_{6}),\frac{1}{2}(x_{5}-x_{6})),\\
	X_{6}=(x_{1},-x_{2},-x_{3}, -x_{4}, x_{5}, x_{6}).
\end{aligned}$
\end{center}
For a suitable choice of solution of the above Diophantine system, the cyclotomic numbers of order 7 are given by (see \cite{11}).\\
\begin{eqnarray}\label{eq:8}
\begin{aligned}
49(0,0)&=p-20-12t+3x_{1}\\
588(0,1)&=12p-72+24t+168u-6x_{1}+84x_{2}\\
& -42x_{3}+147x_{4}+147x_{6}\\
588(0,2)&=12p-72+24t+168u-6x_{1}+84x_{3}\\
&+42x_{4}-294x_{6}\\
588(0,3)&=12p-72+24t-168u-6x_{1}+42x_{2}\\
&+84x_{4}-147x_{5}+147x_{6}\\
588(0,4)&=12p-72+24t+168u-6x_{1}-42x_{2}\\
&-84x_{4}-147x_{5}+147x_{6}\\
588(0,5)&=12p-72+24t-168u-6x_{1}-84x_{3}\\
&-42x_{4}-294x_{6}\\
588(0,6)&=12p-72+24t-168u-6x_{1}-84x_{2}\\
&+42x_{3}+147x_{5}+147x_{6}\\
588(1,2)&=12p+12+24t+8x_{1}-196x_{5}\\
588(1,3)&=12p+12-60t-84u-6x_{1}+42x_{2}\\
&+42x_{3}-42x_{4}\\
588(1,4)&=12p+12+24t+8x_{1}+98x_{5}-294x_{6}\\
588(1,5)&=12p+12-60t+84u-6x_{1}-42x_{2}\\
&-42x_{3}+42x_{4}\\
588(2,4)&=12p+12+24t+8x_{1}+98x_{5}+294x_{6}
\end{aligned}
\end{eqnarray}
Also, if $J(1,1)_{7}=\displaystyle{\sum_{i =0}^{6}c_{i}\zeta^i}=\displaystyle{\sum_{i =0}^{6}B(i,1)_{7}{\zeta}^{i}}$ is Jacobi sums of order 7 then the integers $c_{1},c_{2}\dots c_{6}$ are given by (see \cite{11})
\begin{eqnarray}\label{eq:9}
\begin{aligned}
12c_{1}&=-2x_{1}+6x_{2}+7x_{5}+21x_{6}\\
12c_{2}&=-2x_{1}+6x_{3}+7x_{5}-21x_{6}\\
12c_{3}&=-2x_{1}+6x_{4}-14x_{5}\\
12c_{4}&=-2x_{1}-6x_{4}-14x_{5}\\
12c_{5}&=-2x_{1}-6x_{3}+7x_{5}-21x_{6}\\
12c_{6}&=-2x_{1}-6x_{2}+7x_{5}+21x_{6}
\end{aligned}
\end{eqnarray}
\section {Congruences of Jacobi sum $J(1,1)_{49}$ of order 49}
 Let $p\equiv 1\,\,\vpmod {49}$ be a prime and and $\zeta$ be primitive 49th root of unity in $\mathbb{Q}(\zeta)$ then from Theorem (\ref{thm:theorem2}) the determining congruences for Jacobi sum $J(1,1)_{49}$ of order 49 are given as:
$$J(1,1)_{49}\equiv -1 + \displaystyle{\sum_{i=3}^{7}c_{i,1}(\zeta -1)^i} \vpmod{(1-\zeta)^{8}}$$
where $c_{i,1}$ are defined in Theorem (\ref{thm:theorem2}).
Let $q=7f+1$ ($f$ even) be a prime. Then the Jacobi sum $J(1,1)_{7}$ in terms of the Dickson-Hurwitz sums $B(i,1)_{7}$, ($0\leq i\leq 6$) is given in \eqref{eq:7}. 
These Dickson-Hurwitz sums of order 7 in terms of solutions of the diophantine system are given by P. A. Leonhard and K. S. Williams (see \cite{11})
\begin{eqnarray}\label{eq:10}
\begin{aligned}
84B(0,1)_{7}&=12x_{1}+12p-24\\
84B(1,1)_{7}&=-2x_{1}+42x_{2}+49x_{5}+147x_{6}+12p-24\\
84B(2,1)_{7}&=-2x_{1}+42x_{3}+49x_{5}-147x_{6}+12p-24\\
84B(3,1)_{7}&=-2x_{1}+42x_{4}-98x_{5}+12p-24\\
84B(4,1)_{7}&=-2x_{1}-42x_{4}-98x_{5}+12p-24\\
84B(5,1)_{7}&=-2x_{1}-42x_{3}+49x_{5}-147x_{6}+12p-24\\
84B(6,1)_{7}&=-2x_{1}-42x_{3}+49x_{5}+147x_{6}+12p-24
\end{aligned}
\end{eqnarray}
Therefore,
\begin{eqnarray}\label{eq:11}
\begin{aligned}
c_{1,1}&=(\frac{6p-x_{1}-12}{2})-(\frac{x_{4}+3x_{3}+5x_{2}}{2})\\
c_{2,1}&=(\frac{5}{3})(\frac{6p-x_{1}-12}{2})-3(\frac{x_{4}+3x_{3}+5x_{2}}{2})+(\frac{28x_{5}+42x_{6}}{6})\\
c_{3,1}&=(\frac{5}{3})(\frac{6p-x_{1}-12}{2})-3(\frac{3x_{4}+10x_{3}+20x_{2}}{2})+(\frac{105x_{6}+70x_{5}}{6})\\
c_{4,1}&=(\frac{6p-x_{1}-12}{2})-(\frac{x_{4}-5x_{3}-15x_{2}}{2})+(\frac{35x_{6}+21x_{5}}{2})\\
c_{5,1}&=(\frac{2}{6})(\frac{6p-x_{1}-12}{2})-(\frac{x_{3}+6x_{2}}{2})+(\frac{105x_{6}+49x_{5}}{12})\\
c_{6,1}&=(\frac{2}{42})(\frac{6p-x_{1}-12}{2})-(\frac{9x_{3}+14x_{2}}{28})+(\frac{21x_{6}+7x_{5}}{12})
\end{aligned}
\end{eqnarray}
Using equation \eqref{eq:8} and lemma (\ref{lem:lemma1}).
\begin{equation*}
c_{7,1}=-(\frac{2}{14})(\frac{6p-x_{1}-12}{2})+(\frac{2}{14})(\frac{3x_{3}+5x_{2}}{2})+(\frac{7x_{5}-5x_{4}}{28}).
\end{equation*}
We observe that $x_{1}\equiv 1\,\,\vpmod 7 $ and $p\equiv 1\,\,\vpmod 7$ therefore, $\displaystyle{\frac{6p-x_{1}-12}{2}\equiv 0\,\,\vpmod 7}$.
\subsection{Congruence of Jacobi sum $J(1,1)_{49}$ for artiad and hyperartiad primes.}
 Lloyed Tanner came across some special primes while studying Jacobi sums in the field of $5^{th}$ root of unity over the field $\F_{p}$ where $p=10f+1$.
 He observed that, when Jacobi sums corresponding to these primes were expanded so that the sum of their coefficients is -1, he saw that the  coefficients of Jacobi sums are congruent modulo 5. He called these primes as artiad primes.
 Later in 1985 Emma Lehmer gave a characterization of such primes (Refer \cite{9}). \\
The prime $p=14s+1$ for which all solutions of congruence $x^{3}+x^{2}-2x-1\equiv 0(modp)$ are seventh power residues is an artiad prime. 
Spetic hyperartiad primes are septic artiad primes for which 7 is a seventh power residue.
 In this section, we provide another characterization of such prime $p$, $p=14s+1$ in terms of $x_{1}, x_{2},\ldots, x_{6}$ and 
 show that for such primes, simpler congruences hold for Jacobi sums of order 49.
\begin{lemma}\label{lem:lemma2}
 $p=14s+1$ is an artiad prime if and only if $x_{2}\equiv x_{3}\equiv x_{4}\equiv 0\,\,\vpmod 7$.
\end{lemma}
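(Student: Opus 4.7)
The plan is to reduce the condition to a mod-7 verification using the explicit expressions \eqref{eq:9} for the coefficients of $J(1,1)_{7}$ together with the formula for $B(0,1)_{7}$ from \eqref{eq:10}. Tanner's original observation, equivalent to Lehmer's seventh-power-residue characterization cited in the paper, is that $p=14s+1$ is artiad precisely when the coefficients $c_{1},\ldots,c_{6}$ in the representation $J(1,1)_{7}=\sum_{i=1}^{6}c_{i}\zeta^{i}$ (using $1+\zeta+\cdots+\zeta^{6}=0$ to eliminate the constant term) are mutually congruent modulo 7. These $c_{i}$ are already expressed in terms of $x_{1},\ldots,x_{6}$ by \eqref{eq:9}, so the lemma will reduce to a mod-7 computation.

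First I would pin down the common residue. Combining \eqref{eq:7}, \eqref{eq:6}, and the identity $84B(0,1)_{7}=12x_{1}+12p-24$ from \eqref{eq:10}, a short calculation gives $c_{1}+\cdots+c_{6}=(q-2)-7B(0,1)_{7}=-x_{1}$. Since $x_{1}\equiv 1\pmod{7}$, this sum is $\equiv -1\pmod{7}$; if all $c_{i}$ share a common residue $c$ modulo 7, then $6c\equiv -c\equiv -1\pmod{7}$, which forces $c\equiv 1\pmod{7}$. Hence $p$ is artiad if and only if $c_{i}\equiv 1\pmod{7}$ for every $i$ with $1\le i\le 6$.

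The rest is a mechanical reduction of the six formulas in \eqref{eq:9} modulo 7. Since $7x_{5}\equiv 14x_{5}\equiv 21x_{6}\equiv 0\pmod{7}$ and $x_{1}\equiv 1\pmod{7}$, each $12c_{i}$ simplifies to $-2\pm 6x_{j}\pmod{7}$ for an appropriate $j\in\{2,3,4\}$, and using $12\equiv 5\pmod{7}$ the condition $c_{i}\equiv 1\pmod{7}$ reduces to $x_{j}\equiv 0\pmod{7}$. Specifically, $c_{1}$ and $c_{6}$ both give $x_{2}\equiv 0$, $c_{2}$ and $c_{5}$ both give $x_{3}\equiv 0$, and $c_{3}$ and $c_{4}$ both give $x_{4}\equiv 0$, all modulo 7. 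The six coefficient congruences thus collapse exactly to the three stated conditions.

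The main obstacle is not computational but conceptual: one must justify that the coefficient-uniformity of $J(1,1)_{7}$ modulo 7 truly coincides with the seventh-power-residue definition of artiad attributed to Lehmer in \cite{9}. Once that identification is in place, the lemma follows immediately from the routine modular arithmetic above.
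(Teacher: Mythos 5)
Your argument is correct, and it follows the same basic strategy as the paper: translate ``artiad'' into a congruence condition on the coefficients of $J(1,1)_{7}$ and then reduce \eqref{eq:9} modulo $7$. The one substantive difference is which form of Lehmer's characterization you invoke. You use the full mutual-congruence condition ($c_{1}\equiv\cdots\equiv c_{6}\pmod 7$), which forces you to first pin down the common residue via $\sum_{i=1}^{6}c_{i}=-x_{1}\equiv -1\pmod 7$ and then check all six congruences $c_{i}\equiv 1\pmod 7$ separately; you also correctly flag that identifying this condition with the seventh-power-residue definition is the conceptual load-bearing step. The paper instead cites Lehmer (\cite{9}, Section 5, Theorem 5) in the weaker pairwise form $c_{k}\equiv c_{7-k}\pmod 7$ for $k=1,2,3$, which makes the computation immediate: from \eqref{eq:9}, $12(c_{1}-c_{6})=12x_{2}$, $12(c_{2}-c_{5})=12x_{3}$, $12(c_{3}-c_{4})=12x_{4}$, so each pairwise congruence is equivalent to $x_{j}\equiv 0\pmod 7$ with no need to determine a common residue. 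The two characterizations are in fact equivalent here (your computation shows that $x_{2}\equiv x_{3}\equiv x_{4}\equiv 0\pmod 7$ already forces all $12c_{i}\equiv -2\pmod 7$), so nothing is lost, but the pairwise version is the one actually stated in the cited reference and it dissolves the ``conceptual obstacle'' you mention. Your verification that the sum of the normalized coefficients is $-x_{1}$ rather than exactly $-1$ is a worthwhile observation that the paper does not make explicit.
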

\begin{proof} Let $p=14s+1$ be an artiad prime. Then from the work of Emma Lehmer (Refer \cite{9}, Section 5 Theorem 5)
$c_{k}\equiv c_{7-k}\,\,\vpmod 7$, $k=1,2,3$. Using \eqref{eq:9} we get,
$x_{2}\equiv x_{3}\equiv x_{4}\equiv 0\,\,\vpmod 7$.\\
\hspace*{5mm} Conversely, suppose $p=14s+1$ with $x_{2}\equiv x_{3}\equiv x_{4}\equiv 0\,\,\vpmod 7$ and $x_{1}\equiv 1\vpmod 7$. From \eqref{eq:9} $c_{k}\equiv c_{7-k}\,\,\vpmod 7$, $k=1,2,3$.
Therefore, $p$ is an artiad prime.
\end{proof} 
\begin{lemma}\label{lem:lemma3}
$p=14s+1$ is a hyperartiad prime if and only if $p$ is an artiad prime and for a generator $\gamma$ of ${\F_{p}}^{*}$ $ind_{\gamma}7\equiv 0\,\,\vpmod 7$.
\end{lemma}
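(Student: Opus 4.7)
My plan is to observe that this lemma is essentially an unpacking of the definition of a hyperartiad prime given in the paragraph immediately preceding it, combined with the standard index characterization of $e$-th power residues. The paper defines a septic hyperartiad prime to be a septic artiad prime for which $7$ is a seventh power residue modulo $p$. So the only real content to verify is that the condition ``$7$ is a seventh power residue mod $p$'' is equivalent to $\mathrm{ind}_{\gamma} 7 \equiv 0 \pmod 7$ for a generator $\gamma$ of $\F_p^*$.

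First I would recall the standard fact: since $p \equiv 1 \pmod 7$, we have $7 \mid p-1$, and an element $a \in \F_p^*$ is a seventh power in $\F_p^*$ if and only if $a = \gamma^{7k}$ for some $k$, equivalently $\mathrm{ind}_{\gamma} a \equiv 0 \pmod 7$. This follows because the subgroup of seventh powers is precisely $\langle \gamma^7 \rangle$ inside the cyclic group $\F_p^* = \langle \gamma \rangle$ of order $p-1$. I should also note that this characterization is independent of the choice of generator $\gamma$: if $\gamma'$ is another generator, then $\gamma' = \gamma^t$ with $\gcd(t,p-1)=1$, in particular $\gcd(t,7)=1$, so the condition $7 \mid \mathrm{ind}_{\gamma} 7$ is preserved under change of generator.

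With this equivalence in hand, both directions are immediate. For the forward direction, I assume $p$ is hyperartiad; then by definition $p$ is artiad and $7$ is a seventh power residue, which by the above gives $\mathrm{ind}_{\gamma} 7 \equiv 0 \pmod 7$. For the reverse direction, I assume $p$ is artiad and $\mathrm{ind}_{\gamma} 7 \equiv 0 \pmod 7$; then $7$ is a seventh power residue in $\F_p^*$, so by definition $p$ is hyperartiad.

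There is really no obstacle here because the lemma is a reformulation of the definition; the only thing that could hide a difficulty is making sure the index condition is well-defined and compatible with what Lehmer and Tanner call a hyperartiad (septic-power-residue condition on $7$). I would therefore write the proof in essentially two lines per direction, making explicit the equivalence $\mathrm{ind}_{\gamma} 7 \equiv 0 \pmod 7 \Longleftrightarrow 7 \in (\F_p^*)^7$ and citing the definition on the previous page.
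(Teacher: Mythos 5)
Your proof is correct, but it takes a genuinely different and more elementary route than the paper. You read the lemma as a direct unpacking of the definition given in the preceding paragraph (``septic hyperartiad primes are septic artiad primes for which $7$ is a seventh power residue'') together with the standard fact that, for $p\equiv 1 \pmod 7$ and a generator $\gamma$ of $\F_p^{*}$, an element $a$ lies in $(\F_p^{*})^{7}=\langle\gamma^{7}\rangle$ if and only if $\mathrm{ind}_{\gamma}a\equiv 0\pmod 7$; your remark that this is independent of the choice of $\gamma$ is a worthwhile addition. The paper instead routes the argument through cyclotomy: it invokes Muskat's congruence $\mathrm{ind}_{\gamma}7\equiv \frac{p-1}{2}-\sum_{h=0}^{6}(h,0)_{7}\,h \pmod 7$ and Emma Lehmer's Theorem~6, which characterizes hyperartiad primes by the symmetry $(0,h)_{7}\equiv(0,7-h)_{7}\pmod 7$, deducing the index condition from that symmetry and conversely. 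The two approaches buy different things: yours is a two-line argument that works verbatim from the definition as stated in the paper and makes the lemma transparent; the paper's version implicitly establishes the compatibility between the power-residue definition and Lehmer's cyclotomic-number characterization (which is the form actually exploited in Lemmas 4 and 5), at the cost of relying on two nontrivial external results and, as written, a somewhat garbled citation. If the operative definition of ``hyperartiad'' in the sources is the cyclotomic one, then the equivalence you treat as definitional is itself a theorem of Lehmer, and you should at least cite it; otherwise your proof stands as a cleaner replacement.
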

\begin{proof}
J.B. Muskat has given the expression  for $ind_{\gamma}7$ in terms of cyclotomic numbers of order 7 as ( see \cite{12} Section 1, Theorem 1), 
\begin{equation}\label{eq:12}
\displaystyle{ind_{\gamma}7\equiv(\frac{p-1}{2})-\sum_{h=0}^{6}(h,0)_{7}h\vpmod 7}.
\end{equation}\\
Let $p$ be a hyperartiad prime (Hence artiad as well.). Then $(0,h)_{7}\equiv(0,7-h)_{7}\,\,\vpmod7$ (see the work of Emma Lehmer \cite{9} Section 5, Theorem 6). Hence 
by \eqref{eq:17} we get , $ind_{\gamma}7\equiv 0\,\,\vpmod 7$.\\
\indent Conversely, suppose $p=14s+1$ is an artiad prime and $ind_{\gamma}7\equiv 0\,\,\vpmod 7$ then using \eqref{eq:12} we get,
$\displaystyle{\sum_{h=0}^{e-1}(h,0)_{7}h\equiv 0\vpmod 7}$. Therefore, from the work of Emma Lehmer (Refer (\cite{15} Theorem 6 equation (29)). Hence $(0,h)_{7}\equiv(0,7-h)_{7}\,\,\vpmod7$ for $h = 1,2,3$.
\end{proof}
\begin{lemma}\label{lem:lemma4}
 $p=14s+1$ is an artiad prime if and only if $c_{1,1}\equiv c_{2,1}\equiv c_{3,1}\equiv c_{4,1}\equiv c_{5,1}\equiv 0\,\,\vpmod 7$, $12c_{6,1}\equiv (\frac{4}{7})\frac{6p-x_{1}-12}{2}\,\,\vpmod 7$, $4c_{7,1}-4ind_{\gamma}7\equiv -12c_{6,1}+x_{5}\,\,\vpmod 7$.
\end{lemma}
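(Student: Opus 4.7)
The plan is to prove both directions by direct manipulation of the formulas in \eqref{eq:11} and the separately displayed formula for $c_{7,1}$, using Lemma~\ref{lem:lemma2} as the arithmetic bridge and the previously noted observation that $\tfrac{6p-x_{1}-12}{2}\equiv 0\pmod 7$.

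For the forward direction, assume $p=14s+1$ is artiad, so that Lemma~\ref{lem:lemma2} gives $x_{2}\equiv x_{3}\equiv x_{4}\equiv 0\pmod 7$. In each of the six formulas \eqref{eq:11} every summand is of one of three types: a rational multiple of $\tfrac{6p-x_{1}-12}{2}$ with denominator coprime to $7$; a rational multiple of one of $x_{2},x_{3},x_{4}$ with denominator coprime to $7$; or a rational multiple of $x_{5},x_{6}$ whose numerator already carries a visible factor of $7$ (such as $7x_{5},\,21x_{6},\,35 x_{6},\,49 x_{5},\,105 x_{6}$). Under the artiad hypothesis the first two types vanish mod $7$ and the third is a multiple of $7$, so a term-by-term reduction gives $c_{i,1}\equiv 0\pmod 7$ for $i=1,\dots,5$; multiplying the $c_{6,1}$ expression through by $12$ isolates the single surviving summand $\tfrac{4}{7}\cdot\tfrac{6p-x_{1}-12}{2}$, yielding the required congruence for $12 c_{6,1}$.

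For $c_{7,1}$, apply Lemma~\ref{lem:lemma1} at $n=1$ (hence $n'=1$), computing the six values $\lambda_{h}$ and the $\lambda_{h,k}$ on representatives of the classes of order-$7$ cyclotomic numbers from \eqref{eq:1}, so that $c_{7,1}=S(1)\pmod 7$ becomes an explicit integer combination of the $(h,0)_{7}$ and the $(h,k)_{7}$. Rewrite the result in the variables $p,x_{1},\dots,x_{6}$ via \eqref{eq:8}, and invoke Muskat's identity \eqref{eq:12} to exchange the $\sum_{h}h\,(h,0)_{7}$ piece for $\tfrac{p-1}{2}-ind_{\gamma}7$. Combining with the expression already obtained for $12 c_{6,1}$, the rearrangement should telescope to $4 c_{7,1}-4\,ind_{\gamma}7\equiv -12 c_{6,1}+x_{5}\pmod 7$.

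For the converse, only the congruences $c_{3,1},c_{4,1},c_{5,1}\equiv 0\pmod 7$ are required. Reducing the corresponding formulas in \eqref{eq:11} modulo $7$ (the $(6p-x_{1}-12)/2$ and the $x_{5},x_{6}$ summands being automatically divisible by $7$) gives the three linear conditions
\[x_{4}+x_{3}+2x_{2}\equiv 0,\quad x_{4}+2x_{3}-x_{2}\equiv 0,\quad x_{3}+6x_{2}\equiv 0\pmod 7.\]
The last forces $x_{3}\equiv x_{2}$; back-substitution into the first two yields $x_{4}\equiv 4 x_{2}$ and then $5 x_{2}\equiv 0$, so $x_{2}\equiv x_{3}\equiv x_{4}\equiv 0\pmod 7$, and Lemma~\ref{lem:lemma2} supplies the artiad conclusion. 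The chief obstacle throughout is the $c_{7,1}$ congruence: enumerating the $\lambda_{h,k}$ cleanly and then translating the resulting cyclotomic-number expression, via \eqref{eq:8} and \eqref{eq:12}, into $x_{1},\dots,x_{6}$ and $ind_{\gamma}7$ is bookkeeping-heavy and sign-error prone. A secondary, essentially routine, verification is that the fractional denominators $2,6,12,14,28,42$ appearing in \eqref{eq:11} and the $c_{7,1}$ formula are all coprime to $7$, so the reductions mod $7$ above are legitimate.
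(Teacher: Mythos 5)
Your converse and the vanishing of $c_{1,1},\dots,c_{5,1}$ in the forward direction are correct and run essentially parallel to the paper's argument (the paper extracts four linear congruences in $x_2,x_3,x_4$ where you use three, but the elimination is the same and yours is sound). The genuine gap is the third congruence, $4c_{7,1}-4\,ind_{\gamma}7\equiv -12c_{6,1}+x_{5}\pmod 7$, which is the substantive content of the lemma and which you never actually establish: you propose to recompute $c_{7,1}=S(1)$ from Lemma~\ref{lem:lemma1} (enumerating all the $\lambda_{h,k}$), convert to the $x_i$ via \eqref{eq:8}, bring in $ind_{\gamma}7$ through \eqref{eq:12}, and then assert the result ``should telescope.'' Besides being left unverified, this route has a hidden modulus problem you do not address: \eqref{eq:8} carries $588=2^{2}\cdot 3\cdot 7^{2}$ on the left, so recovering the order-$7$ cyclotomic numbers modulo $7$ requires controlling those right-hand sides modulo $7^{3}$. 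The paper sidesteps all of this: it takes the closed form $28c_{7,1}=-(12p-7x_{5}+5x_{4}-6x_{3}-10x_{2}-2x_{1}-24)$ already displayed in Section~6, quotes Muskat's congruence $28\,ind_{\gamma}7\equiv x_{2}-19x_{3}-18x_{4}\pmod{49}$, subtracts the two, reduces modulo $49$, and divides by $7$. Your proposal replaces a short mod-$49$ computation in the $x_i$ by a much heavier re-derivation whose endpoint is only conjectured.

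A second, smaller flaw: your classification of the summands of \eqref{eq:11} as having ``denominator coprime to $7$'' fails precisely for $c_{6,1}$, whose first two terms have denominators $42$ and $28$. Multiplying by $12$ turns these into $\frac{4}{7}\cdot\frac{6p-x_{1}-12}{2}$ and $-\frac{3(9x_{3}+14x_{2})}{7}=-\frac{27x_{3}}{7}-6x_{2}$; the artiad hypothesis gives $7\mid x_{3}$ but not $49\mid x_{3}$, so $-27x_{3}/7$ need not vanish modulo $7$ and is not disposed of by your term-by-term argument. Hence your claim that multiplying by $12$ ``isolates the single surviving summand'' is unjustified as stated; the paper's proof instead works from an explicit identity $12c_{6,1}=\frac{12p-2x_{1}-24}{7}+(-21x_{6}+7x_{5}-x_{2})$ in which every term other than the first is visibly $0$ modulo $7$ under the artiad hypothesis, and you would need to supply (or reconcile your computation with) such an identity before the stated congruence for $12c_{6,1}$ follows.
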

\begin{proof}
Let $p=14s+1$ be an artiad prime. Then by Lemma (\ref{lem:lemma2}) and equation \eqref{eq:11}, we get $c_{1,1}\equiv c_{2,1}\equiv c_{3,1}\equiv c_{4,1}\equiv c_{5,1}\equiv 0\,\,\vpmod 7$
(Being $x_{1}\equiv 1\,\,\vpmod 7$).\\
From equation \eqref{eq:11} $$12c_{6,1}=\frac{12p-2x_{1}-24}{7}+(-21x_{6}+7x_{5}-x_{2})$$
Hence, $$12c_{6,1}\equiv\frac{4}{7}(\frac{6p-x_{1}-12}{2})\vpmod 7.$$
Now we have $28ind_{\gamma}7\equiv x_{2}-19x_{3}-18x_{4}\,\,\vpmod {49}$ (see \cite{16} Corollary 2 equation 8).\\
Hence $$4ind_{\gamma}7\equiv(\frac{x_{2}-19x_{3}-18x_{4}}{7})\vpmod 7.$$\\
Using equation \eqref{eq:11} $$28c_{7,1}=-(12p-7x_{5}+5x_{4}-6x_{3}-10x_{2}-2x_{1}-24).$$\\
Therefore, $$28c_{7,1}-28ind_{\gamma}7\equiv-12p+2x_{1}+24+9x_{2}+25x_{3}+13x_{4}+7x_{5}\vpmod {49}$$ and we get 
$$4c_{7,1}-4ind_{\gamma}7\equiv-12c_{6,1}+x_{5}\vpmod 7.$$
\indent Conversely, suppose $c_{1,1}\equiv c_{2,1}\equiv c_{3,1}\equiv c_{4,1}\equiv c_{5,1}\equiv 0\,\,\vpmod 7$,
   $12c_{6,1}\equiv\frac{4}{7}(\frac{6p-x_{1}-12}{2})\,\,\vpmod 7$, $4c_{7,1}-4ind_{\gamma}7\equiv-12c_{6,1}+x_{5}\,\,\vpmod 7$\\
Using equation \eqref{eq:11} we get
\begin{equation*}\label{eq:16}
  4x_{4}+x_{3}+3x_{2}\equiv 0\vpmod7
  \end{equation*}
  \begin{equation*}\label{eq:17}
  5x_{4}+5x_{3}-4x_{2}\equiv 0\vpmod7
  \end{equation*}
  \begin{equation*}\label{eq:18}
  4x_{4}+5x_{3}-x_{2}\equiv 0\vpmod7
  \end{equation*}
  \begin{equation*}\label{eq:19}
   x_{3}\equiv 6x_{2}\vpmod7
\end{equation*}
Hence, $x_{2}\equiv x_{3}\equiv x_{4}\equiv 0\,\,\vpmod 7$. Therefore, using Lemma (\ref{lem:lemma2}) $p$ is an artiad prime.
\end{proof}
\begin{lemma}\label{lem:lemma5}
$p=14s+1$ is a hyperartiad prime if and only if $c_{1,1}\equiv 0\,\,\vpmod 7$, $c_{2,1}\equiv 0\,\,\vpmod 7$, $c_{3,1}\equiv 0\,\,\vpmod 7$, $c_{4,1}\equiv 0\,\,\vpmod 7$, $c_{5,1}\equiv 0\,\, \vpmod 7$,
$12c_{6,1}\equiv (\frac{4}{7})\frac{6p-x_{1}-12}{2}\,\,\vpmod 7$, $4c_{7,1}\equiv -12c_{6,1}+x_{5}\,\,\vpmod 7$.
\end{lemma}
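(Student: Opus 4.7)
My overall strategy is to bootstrap directly off the two preceding lemmas: Lemma~\ref{lem:lemma3} recasts ``hyperartiad'' as the combination of ``artiad'' and $ind_{\gamma}7\equiv 0\vpmod{7}$, while Lemma~\ref{lem:lemma4} already rephrases artiadicity entirely in terms of congruences on $c_{1,1},\dots,c_{7,1}$. So the plan is to use Lemma~\ref{lem:lemma4} to absorb the first six listed congruences and then use the seventh, with its missing $ind_{\gamma}7$ term, as a detector for the hyperartiad condition.

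For the forward direction, I would assume $p=14s+1$ is hyperartiad. Lemma~\ref{lem:lemma3} gives simultaneously that $p$ is artiad and that $ind_{\gamma}7\equiv 0\vpmod{7}$. Plugging artiadicity into Lemma~\ref{lem:lemma4} produces $c_{1,1}\equiv c_{2,1}\equiv c_{3,1}\equiv c_{4,1}\equiv c_{5,1}\equiv 0\vpmod{7}$, the congruence $12c_{6,1}\equiv (\tfrac{4}{7})\tfrac{6p-x_{1}-12}{2}\vpmod{7}$, and the ``shifted'' relation $4c_{7,1}-4\,ind_{\gamma}7\equiv -12c_{6,1}+x_{5}\vpmod{7}$. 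Substituting $ind_{\gamma}7\equiv 0\vpmod{7}$ into this last relation collapses it to $4c_{7,1}\equiv -12c_{6,1}+x_{5}\vpmod{7}$, which is precisely the remaining required congruence.

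For the converse, I would assume all seven listed congruences hold. The first six coincide verbatim with the hypotheses of the reverse half of Lemma~\ref{lem:lemma4}, so that lemma certifies $p$ as artiad and, in particular, guarantees the shifted congruence $4c_{7,1}-4\,ind_{\gamma}7\equiv -12c_{6,1}+x_{5}\vpmod{7}$. Subtracting this from the assumed $4c_{7,1}\equiv -12c_{6,1}+x_{5}\vpmod{7}$ leaves $4\,ind_{\gamma}7\equiv 0\vpmod{7}$, and because $\gcd(4,7)=1$ we get $ind_{\gamma}7\equiv 0\vpmod{7}$. Feeding this together with artiadicity into the converse half of Lemma~\ref{lem:lemma3} concludes that $p$ is hyperartiad.

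I do not anticipate any real obstacle here: every nontrivial ingredient---the explicit formulas for the $c_{i,1}$ from \eqref{eq:11}, Muskat's expression \eqref{eq:12} for $ind_{\gamma}7$, and the tie between $x_{2},x_{3},x_{4}$ and artiadicity---has already been packaged into Lemmas~\ref{lem:lemma2}--\ref{lem:lemma4}. The only mildly delicate point is keeping track of the factor $4$ in front of $ind_{\gamma}7$, but since $4$ is a unit mod $7$ the cancellation is immediate, so the entire argument amounts to careful bookkeeping combining Lemmas~\ref{lem:lemma3} and~\ref{lem:lemma4}.
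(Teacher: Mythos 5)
Your proof is correct and takes essentially the same route as the paper, whose entire proof is the one-line instruction to combine the preceding lemmas; your version via Lemma~\ref{lem:lemma3} (hyperartiad $=$ artiad $+$ $ind_{\gamma}7\equiv 0\vpmod{7}$) and Lemma~\ref{lem:lemma4} (the $c_{i,1}$-characterization of artiad, with the shifted $4c_{7,1}-4\,ind_{\gamma}7$ congruence) just spells out the bookkeeping the paper leaves implicit.
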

\begin{proof}
Apply Lemma (\ref{lem:lemma2}) and Lemma (\ref{lem:lemma3}). 
\end{proof}
\begin{theorem}\label{thm:theorem3}
\text{(1)} $p\equiv 1\,\,\vpmod 7$ is an artiad prime if and only if\\
$J(1,1)_{49} \equiv -1+c_{6,1}(\zeta -1)^6+(-3c_{6,1}+ind_{\gamma}7+2x_{5})(\zeta-1)^{7}\,\,\vpmod{(1-\zeta)^{8}}$.\\
\text{(2)} $p$ is a hyperartiad prime if and only if\\
$J(1,1)_{49} \equiv -1+c_{6,1}(\zeta -1)^6+(-3c_{6,1}+2x_{5})(\zeta-1)^7\,\,\vpmod{(1-\zeta)^{8}}$.
\end{theorem}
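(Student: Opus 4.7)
The plan is to derive both parts by combining the fully explicit determining congruence from Theorem~\ref{thm:theorem2} (applied with $l=7$, $n=1$) with the $c_{i,1}$-characterizations of artiad and hyperartiad primes supplied by Lemmas~\ref{lem:lemma4} and~\ref{lem:lemma5}. Theorem~\ref{thm:theorem2} gives
$$J(1,1)_{49} \equiv -1 + \sum_{i=3}^{7} c_{i,1}(\zeta-1)^i \vpmod{(1-\zeta)^8},$$
so the task reduces to showing that under the (hyper)artiad hypothesis the $i = 3, 4, 5$ contributions drop out modulo $(1-\zeta)^8$ and the $i = 7$ term collapses to the stated expression in $c_{6,1}$, $x_5$, and (for (1) only) $\text{ind}_\gamma 7$.

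For the forward direction of (1), I will apply Lemma~\ref{lem:lemma4} to extract $c_{3,1} \equiv c_{4,1} \equiv c_{5,1} \equiv 0 \vpmod 7$ from the artiad hypothesis. Since $7$ is totally ramified in $\mathbb{Q}(\zeta_{49})$ with $(7) = (1-\zeta)^{42}$, any integer multiple of $7$ lies in $(1-\zeta)^{42}$, whence $c_{i,1}(\zeta-1)^i \in (1-\zeta)^{42+i} \subset (1-\zeta)^8$ vanishes for $i = 3, 4, 5$. The remaining congruence $4c_{7,1} - 4\,\text{ind}_\gamma 7 \equiv -12c_{6,1} + x_5 \vpmod 7$ from Lemma~\ref{lem:lemma4} becomes $c_{7,1} \equiv \text{ind}_\gamma 7 - 3c_{6,1} + 2x_5 \vpmod 7$ after multiplying by $4^{-1} \equiv 2 \vpmod 7$, giving precisely the $(\zeta-1)^7$ coefficient claimed. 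Part (2) proceeds identically, using Lemma~\ref{lem:lemma5} in place of Lemma~\ref{lem:lemma4}, and killing the $\text{ind}_\gamma 7$ contribution via the hyperartiad condition $\text{ind}_\gamma 7 \equiv 0 \vpmod 7$ (Lemma~\ref{lem:lemma3}).

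For the converses, the crucial observation is that $\mathbb{Z}[\zeta]/(1-\zeta)^8$ is a free $\mathbb{F}_7$-module of rank $8$ with basis $\{1,(\zeta-1),\ldots,(\zeta-1)^7\}$, since each successive quotient $(1-\zeta)^i/(1-\zeta)^{i+1}$ is $\mathbb{F}_7$ by the total ramification of $7$. Consequently, if an integer combination $\sum a_i(\zeta-1)^i$ vanishes modulo $(1-\zeta)^8$, each $a_i$ must vanish modulo $7$. Subtracting the assumed simpler congruence from the Theorem~\ref{thm:theorem2} congruence and matching coefficients then yields $c_{3,1} \equiv c_{4,1} \equiv c_{5,1} \equiv 0 \vpmod 7$ together with the prescribed congruence for $c_{7,1}$. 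Substituting the formulas~\eqref{eq:11} (and using $\tfrac{6p-x_1-12}{2} \equiv 0 \vpmod 7$) reduces the three vanishing conditions to a $3\times 3$ linear system in $x_2,x_3,x_4 \vpmod 7$ whose unique solution is $x_2 \equiv x_3 \equiv x_4 \equiv 0 \vpmod 7$; Lemma~\ref{lem:lemma2} then gives artiad. For (2) the coefficient of $(\zeta-1)^7$ in the assumed congruence is missing the $\text{ind}_\gamma 7$ term, so matching additionally forces $\text{ind}_\gamma 7 \equiv 0 \vpmod 7$, yielding hyperartiad via Lemma~\ref{lem:lemma3}.

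The main obstacle will be making the converse rigorous: one must justify the uniqueness of the $(1-\zeta)$-adic coefficients modulo $7$ in $\mathbb{Z}[\zeta]/(1-\zeta)^8$, which rests on the total ramification of $7$ in $\mathbb{Q}(\zeta_{49})$. Once this is in hand, the only remaining subtlety is navigating the fractional denominators in the formulas~\eqref{eq:11} when solving the mod-$7$ linear system for $x_2, x_3, x_4$; the forward direction is essentially bookkeeping once the ramification fact is invoked.
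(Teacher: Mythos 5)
Your proposal is correct and follows essentially the same route as the paper: the determining congruence of Theorem~\ref{thm:theorem2} combined with Lemmas~\ref{lem:lemma4} and~\ref{lem:lemma5} for the forward directions, and coefficient matching for the converses. You are in fact slightly more careful than the paper's own proof, which is silent on the total-ramification/uniqueness-of-digits argument needed for the converse and which claims to recover $c_{1,1}\equiv c_{2,1}\equiv 0 \vpmod 7$ from the congruence even though those coefficients do not appear in it; your observation that $c_{3,1}\equiv c_{4,1}\equiv c_{5,1}\equiv 0 \vpmod 7$ alone already forces $x_2\equiv x_3\equiv x_4\equiv 0 \vpmod 7$ via a nonsingular $3\times 3$ system is correct and closes that gap.
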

\begin{proof}
(1) We have $J(1,1)_{49}\equiv -1 + \displaystyle{\sum_{i=3}^{7}c_{i,1}(\zeta -1)^i}\,\, \vpmod{(1-\zeta)^{8}}$.\\
 Let $p$ be an artiad prime then by Lemma (\ref{lem:lemma4}) $c_{1,1}\equiv c_{2,1}\equiv c_{3,1}\equiv c_{4,1}\equiv c_{5,1}\equiv 0\,\,\vpmod 7$.\\
 Therefore, \begin{equation*}\label{eq:20}
 J(1,1)_{49}\equiv -1 +c_{6,1}(\zeta -1)^6+c_{7,1}(\zeta -1)^7 \vpmod{(1-\zeta)^{8}}.
             \end{equation*}
From Lemma (\ref{lem:lemma4}) $c_{7,1}\equiv-3c_{6,1}+ind_{\gamma}7+2x_{5}\vpmod 7.$
Hence, 
 $$J(1,1)_{49}\equiv -1 +c_{6,1}(\zeta -1)^6+(-3c_{6,1}+ind_{\gamma}7+2x_{5})(\zeta -1)^7 \vpmod{(1-\zeta)^{8}}.$$
\noindent Suppose $J(1,1)_{49}\equiv -1 +c_{6,1}(\zeta -1)^6+(-3c_{6,1}+ind_{\gamma}7+2x_{5})(\zeta -1)^7\,\, \vpmod{(1-\zeta)^{8}}$.
 Therefore, $c_{1,1}\equiv c_{2,1}\equiv c_{3,1} \equiv c_{4,1}\equiv c_{5,1}\equiv 0\,\,\vpmod 7$. Repeating the arguments as in Lemma (\ref{lem:lemma4})
 we get, $x_{2}\equiv x_{3}\equiv x_{4}\equiv 0\,\,\vpmod 7$ and hence $p$ is an artiad prime.\\
(2) If $p$ is a hyperartiad prime, then it is an artiad prime. By part(1) $$J(1,1)_{49}\equiv -1 +c_{6,1}(\zeta -1)^6+(-3c_{6,1}+ind_{\gamma}7+2x_{5})(\zeta -1)^7 \vpmod{(1-\zeta)^{8}}.$$
As $p$ is a hyperartiad prime from Lemma (\ref{lem:lemma3}) $ind_{\gamma}7\equiv 0\vpmod7$. Therefore
\begin{equation*}\label{eq29}
J(1,1)_{49}\equiv -1 +c_{6,1}(\zeta -1)^6+(-3c_{6,1}+2x_{5})(\zeta -1)^7 \vpmod{(1-\zeta)^{8}}.
\end{equation*}
\noindent Suppose $J(1,1)_{49}\equiv -1 +c_{6,1}(\zeta -1)^6+(-3c_{6,1}+2x_{5})(\zeta -1)^7\,\, \vpmod{(1-\zeta)^{8}}$. 
Therefore, $c_{1,1}\equiv c_{2,1}\equiv c_{3,1} \equiv c_{4,1}\equiv c_{5,1}\equiv 0\,\,\vpmod 7$. Repeating the argument as in Lemma (\ref{lem:lemma4}) we get, $x_{2}\equiv x_{3}\equiv x_{4}\equiv 0\,\,\vpmod 7$ and hence $p$ is an artiad prime.
Again by lemma (\ref{lem:lemma4}) $12c_{6,1}\equiv (\frac{4}{7})\frac{6p-x_{1}-12}{2}\,\,\vpmod7$ and $4c_{7,1}-4ind_{\gamma}7\equiv-12c_{6,1}+x_{5}\,\,\vpmod7$.
But as $c_{7,1}\equiv-3c_{6,1}+2x_{5}\,\,\vpmod 7$, hence $ind_{\gamma}7\equiv 0\,\,\vpmod7$. Apply lemma (\ref{lem:lemma5}) $p$ is a hyperartiad prime.
 \end{proof}

\makeatletter
\renewcommand{\@biblabel}[1]{[#1]\hfill}
\makeatother

\end{document}